\documentclass[11pt]{amsart}

\usepackage{amsmath}
\usepackage{mathtools}
\usepackage{fullpage}
\usepackage{xspace}
\usepackage[psamsfonts]{amssymb}
\usepackage[latin1]{inputenc}
\usepackage{graphicx,color}
\usepackage{hyperref}
\usepackage{graphicx}
\usepackage{xcolor}

\usepackage{amsmath}%
\usepackage{amsthm}%
\usepackage{amscd}
\usepackage{amsfonts}%
\usepackage{amssymb}%
\usepackage{graphicx}
\usepackage{stmaryrd}
\usepackage{mathrsfs}
\usepackage{tikz}
\usetikzlibrary{matrix,arrows}
\usepackage{tikz-cd}
\usepackage[all]{xy}
\usepackage{bbm} 
\usepackage{longtable}

\usepackage{lineno} 

\newtheorem{theorem}{Theorem}[section]

\newtheorem{example}{Example}

\newtheorem{lemma}[theorem]{Lemma}

\newtheorem{proposition}[theorem]{Proposition}
\theoremstyle{remark}
\newtheorem{remark}[theorem]{Remark}

\numberwithin{equation}{section}

\newcommand{\C}{\mathbb{C}}

\newcommand{\N}{\mathbb{N}}

\newcommand{\Q}{\mathbb{Q}}
\newcommand{\R}{\mathbb{R}}

\newcommand{\Z}{\mathbb{Z}}

\newcommand{\Zp}{\mathbb{Z}_p}
\newcommand{\Li}{\mathrm{Li}}
\newcommand{\dimH}{\mathrm{dim}_{\mathrm{H}}}

\newcommand\rquot[2]{
  \mathchoice
  {
    \text{\raise0.5ex\hbox{$#1$}\big/\lower0.5ex\hbox{$#2$}}%
  }
  {
    #1\,/\,#2
  }
  {
    #1\,/\,#2
  }
  {
    #1\,/\,#2
  }
}

\newcommand\lrquot[3]{
  \mathchoice
  {
    \text{\lower0.5ex\hbox{$#1$}\big\backslash\raise0.5ex\hbox{$#2$\!}\big/
      \lower0.5ex\hbox{\!\!$#3$}}%
  }
  {
    #1\,\backslash\,#2\,/\,#3
  }
  {
    #1\,\backslash\,#2\,/\,#3
  }
  {
    #1\,\backslash\,#2\,/\,#3
  }
}

\newcommand\lquot[2]{
  \mathchoice
  {
    \text{\lower0.5ex\hbox{$#1$}\big\backslash\raise0.5ex\hbox{$#2$}}%
  }
  {
    #1\,\backslash\,#2
  }
  {
    #1\,\backslash\,#2
  }
  {
    #1\,\backslash\,#2
  }
}

\usepackage{fontawesome}

  \DeclareFontFamily{U}{wncy}{}
    \DeclareFontShape{U}{wncy}{m}{n}{<->wncyr10}{}
    \DeclareSymbolFont{mcy}{U}{wncy}{m}{n}
    \DeclareMathSymbol{\Sha}{\mathord}{mcy}{"58}
\begin{document}

\title{Multifractal analysis of power means for the Schneider map on $p\mathbb{Z}_{p}$}

\date{\today}



\author{ Matias Alvarado and Nicol\'as Ar\'evalo-Hurtado}	
\address{Instituto de Matem\'aticas, Universidad de Talca, Talca, Chile.}
\email[M. Alvarado]{matias.alvarado@utalca.cl}
\address{Universidad Escuela Colombiana de Ingenier\'ia Julio Garavito, Bogot\'a, Colombia.}

\email[N. Ar\'evalo]{nicolas.arevalo-h@escuelaing.edu.co}
\urladdr{\url{https://sites.google.com/view/nicolasarevalomath/}}

\begin{abstract}
We study the asymptotic power means of the coefficients associated with the Schneider continued fraction map on $p\Z_p$. Using tools from thermodynamic formalism, we compute the Hausdorff dimension of the corresponding level sets and obtain explicit formulas for the associated multifractal spectra. The locally constant nature of the geometric potential enables a precise description in terms of polylogarithm functions, in sharp contrast with the classical real setting.

\end{abstract}

\maketitle
\vspace{-10mm}
\section{Introduction}\label{introduction}
The study of the digits arising in continued fraction expansions has long been a central topic in number theory and dynamical systems. Beyond their intrinsic arithmetic significance, these coefficients encode subtle metric and statistical properties of numbers. A classical result of Khinchin \cite{Khinchin35} (see also \cite{Khinchin}) establishes that, for Lebesgue-almost every $x\in (0,1)$, the asymptotic geometric mean of the digits appearing in the continued fraction of $x$ converges to a universal constant. This result initiated a broad line of research devoted to the statistical behavior of continued fraction digits, including the study of various types of asymptotic averages from an ergodic-theoretic perspective \cite{HMR, RyllNardzewski}.

While almost-everywhere results provide valuable information, they fail to capture the full complexity of continued fraction expansions. A more refined approach consists of studying the level sets of points for which a given asymptotic average attains a prescribed value. Although such sets typically have zero Lebesgue measure, they exhibit a rich fractal structure. Their size is naturally described in terms of Hausdorff dimension, leading to a multifractal analysis of the underlying dynamical system. This theory has been extensively developed for classical continued fractions, particularly in connection with the Gauss map (see, for instance, \cite[Section 6]{iommi2015multifractal}).

Among the various quantities that can be studied, the asymptotic power means of the digits provide a natural and flexible family, interpolating between classical notions such as the arithmetic, geometric, and harmonic means. However, the multifractal analysis of these power means presents substantial technical challenges. In the classical setting, standard methods (drawing on the work of Pesin and Weiss \cite{yh} and on the generalization by Jaerisch and Kesseb{\"o}hmer in \cite{jaerisch2011regularity} for the Gauss map) rely on a detailed understanding of topological pressure and Lyapunov exponents associated with appropriate invariant measures. These quantities are typically defined only implicitly and are notoriously difficult to compute explicitly, which severely limits the possibility of obtaining closed-form expressions for the corresponding multifractal spectra.

In this paper, we show that these difficulties can be overcome in the $p$-adic setting by considering the continued fraction algorithm introduced by Schneider. A key feature of this dynamical system is that its associated geometric potential is locally constant, which allows for an explicit computation of the topological pressure and a precise control of the Lyapunov spectrum. As a consequence, the multifractal analysis of power means becomes tractable and leads to explicit formulas, in contrast to the classical real case.

We now briefly describe the setting. Let $p$ be a prime number and consider the Schneider map $T_{p}:p\Z_{p}\rightarrow p\Z_{p}$ defined by $T_{p}(0)=0$ and, for $x\neq 0$
  \begin{align*}
      T_{p}(x)=\dfrac{p^{a_{1}(x)}}{x}-b_{1}(x),
  \end{align*} where $a_{1}(x)=v_{p}(x)$ denotes the $p$-adic valuation and $b_1(x)\in\{1,2,...,p-1\}$ is uniquely determined by the congruence $b_{1}(x)\equiv p^{a_{1}(x)}/x$ (mod $p$). More generally, we define $a_i(x)=a_1(T_p^{i-1}x)$ and $b_i(x)=b_1(T_p^{i-1}x).$ These coefficients allow us to construct the continued fraction expansion of $x$, as shown in equation \eqref{SchneiderContinued}.

For each $q\in \R\setminus\{0\}$ and for points $x\in p\Zp$ whose forward orbit never vanishes, we define the asymptotic $q$-power mean by   
\[M_{q}(x)=\lim_{n\to \infty} \left(\frac{1}{n}\sum^{n}_{i=1}a_{i}(x)^{q}\right)^{\frac{1}{q}},\]
whenever this limit exists.

In the case $q=0$, we instead consider the asymptotic geometric mean
\[M_0(x)=\lim_{n\to \infty}\prod_{i=1}^{n}a_{i}(x)^{\frac{1}{n}}.\]

Given a real parameter $q$ and a value $\beta\geq 1$, our goal is to study the Hausdorff dimension of

\[K_q(\beta)=\left\{x\in p\Z_p: M_{q}(x)=\beta \right\}.\]

The case of the arithmetic mean ($q=1$) has been previously investigated by Hu, Yu, and Zhao in \cite{hyz}, where an explicit formula for the Hausdorff dimension of $K_{1}(\beta)$ was obtained. However, extending these results to general power means is far from straightforward and requires new ideas.

Our approach is based on a thermodynamic formalism framework. More precisely, we show that the equilibrium state associated with the potential defining the $q$-power mean can be identified with the equilibrium state of a suitable geometric potential. By means of Birkhoff's ergodic theorem, this allows us to relate each $q$-power mean to an appropriate arithmetic mean, thereby reducing the problem to the study of the Lyapunov spectrum. This 
reduction, ctogether with recent results obtained in \cite[Theorem 1.1]{paper1}, leads to an explicit and computable formula for the Hausdorff dimension of the level sets $K_q(\beta)$.

We denote by $\dim_\mathrm{H}(\cdot)$ the Hausdorff dimension. See \cite[Section 2]{paper1} for a brief discussion on Hausdorff dimension with respect to the $p$-adic norm.

The main result of this article is the following.

\begin{theorem}\label{MainTheoremPaper2}
    Let $q\in \mathbb{R}$ and $\beta\geq 1$. There exists a unique $\alpha_{\beta}\geq \log p$ such that 
    \begin{align}\label{EcuTheoremPrincipal 1}
        \dimH\left(K_{q}(\beta)\right)=\dfrac{\alpha_{\beta}\log\alpha_{\beta}-(\alpha_{\beta}-1)\log(\alpha_{\beta}-1)+\log(p-1)}{\alpha_{\beta} \log p},
    \end{align}where $\alpha_{\beta}$ is such that 
    \begin{align}\label{EcuTheoremPrincipal 2}
        \beta^{q}=
            \dfrac{\log p}{\alpha_\beta-\log p}\Li_{-q}\left(\dfrac{\alpha_\beta-\log p}{\alpha_\beta}\right)
    \end{align}for $q\neq 0$, and
    \begin{align}\label{EcuTheoremPrincipal 21}
        \log(\beta)=-\dfrac{\log p}{\alpha_{\beta}-\log p}\left.\dfrac{d}{ds}\Li_{s}\left(\dfrac{\alpha_\beta-\log p}{\alpha_\beta}\right)\right|_{s=0}
    \end{align} for $q=0$, where $\Li$ denotes the polylogarithm function.
\end{theorem}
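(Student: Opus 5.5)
Throughout, I use the coding of $T_p$ by digits $(a_i,b_i)\in\N\times\{1,\dots,p-1\}$, where each cylinder of rank one has measure $(p-1)^{-1}p^{-a}$ relative to $p\Z_p$. I also use that $|T_p'(x)|_p=p^{2a_1(x)}$. Thus $\log|T_p'|=2a_1\log p$ is locally constant, and the Lyapunov exponent of an invariant measure $\mu$ is $\lambda(\mu)=2\log p\int a_1\,d\mu$. I will normalize so that the Lyapunov spectrum of \cite[Theorem 1.1]{paper1} is stated in terms of $\alpha=\lim\frac1n\sum a_i\log p\cdot(\text{const})$. The form of \eqref{EcuTheoremPrincipal 1} suggests that \cite[Theorem 1.1]{paper1} gives $\dimH\{x:\lambda(x)=\alpha\}$ by exactly that expression, for $\alpha\ge\log p$. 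I will take this as the Lyapunov spectrum $L(\alpha)$.

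\textbf{Bernoulli equilibrium states.} Consider the family of geometric potentials $-t\log|T_p'|$, or more generally $\phi_s=-s\,a_1\log p$. Because they depend only on $a_1$, their equilibrium states are Bernoulli measures $\mu_s$. Under $\mu_s$ the digits $a_i$ are i.i.d. geometric, $\mu_s(a_1=n)=(1-r)r^{n-1}$ with $r=(p-1)p^{-s}$, and the $b_i$ are uniform. The mean is $\int a_1\,d\mu_s=1/(1-r)$. Parametrize by the Lyapunov value $\alpha=\log p\cdot\int a_1\,d\mu_s$ in the paper's normalization, so that $r=(\alpha-\log p)/\alpha$ and $1-r=\log p/\alpha$. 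Then
\[
\int a_1^q\,d\mu=\sum_{n\ge1}n^q(1-r)r^{n-1}=\frac{1-r}{r}\Li_{-q}(r)=\frac{\log p}{\alpha-\log p}\Li_{-q}\!\left(\frac{\alpha-\log p}{\alpha}\right),
\]
and $\int\log a_1\,d\mu=-\frac{1-r}{r}\frac{d}{ds}\Li_s(r)|_{s=0}$. These match \eqref{EcuTheoremPrincipal 2} and \eqref{EcuTheoremPrincipal 21}. Uniqueness of $\alpha_\beta$ follows from strict monotonicity in $r$ of $\mathbb{E}_r[a_1^q]^{1/q}$, or of $\mathbb{E}_r[\log a_1]$ for $q=0$. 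This holds because the geometric laws are stochastically increasing in $r$. The range is $[1,\infty)$, attained as $r$ goes from $0$ to $1$.

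\textbf{Lower bound.} By Birkhoff's theorem, $\mu_{\alpha_\beta}$-a.e. point has $M_q=\beta$ and Lyapunov exponent $\alpha_\beta$. Hence
\[
\dimH K_q(\beta)\ge \dimH\mu_{\alpha_\beta}=\frac{h(\mu_{\alpha_\beta})}{\lambda(\mu_{\alpha_\beta})}.
\]
A direct entropy computation, $h=\log(p-1)+H(\text{geometric}(r))$, reproduces the right-hand side of \eqref{EcuTheoremPrincipal 1}.

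\textbf{Upper bound (main obstacle).} I would use the conditional variational principle for the potential $\psi=a_1^q$, or $\log a_1$ when $q=0$, over the countable full shift. It gives
\[
\dimH K_q(\beta)=\sup\left\{\frac{h(\mu)}{\lambda(\mu)}:\ \int\psi\,d\mu=\beta^q\right\}.
\]
This is in the style of Iommi--Jordan and Jaerisch--Kesseb\"ohmer, and some care is needed with non-compactness and with $q<0$, where $\psi$ is bounded. The key claim is that the supremum is attained by a Bernoulli measure with geometric $a_1$-law. My first attempt would avoid the variational principle entirely and use covering arguments, as follows. Fix the level $\beta$. By Lagrange multipliers, the maximizer of $h/\lambda$ under the constraint is the equilibrium state of $-t\,a_1\log p+\theta\,\psi$, which is a Bernoulli measure with law $\propto p^{-ta}e^{\theta a^q}$. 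I would then need to show that the optimal $\theta$ vanishes. This is not obvious, and I expect it to be where the paper's "identification of equilibrium states" enters.

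Concretely, I would compare with the Lyapunov level sets. Any ergodic $\mu$ with $\int\psi\,d\mu=\beta^q$ has some $\alpha=\lambda(\mu)$, and so $h(\mu)/\lambda(\mu)\le L(\alpha)$. I then need to show that $L(\alpha)\le L(\alpha_\beta)$ on the set of achievable pairs $(\alpha,\beta)$. Since $L$ is unimodal, with maximum $1$ at the Gauss-measure value, this reduces to checking that the constraint forces $\alpha$ to the appropriate side of $\alpha_\beta$. That is the delicate point, and I would verify it via the Jensen/Hölder inequalities relating $\mathbb{E}[a^q]^{1/q}$ to $\mathbb{E}[a]$. Failing that, I would directly maximize entropy over digit laws with the two moment constraints.
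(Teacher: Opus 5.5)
Your computation of $\int a_1^q\,d\mu$ and $\int\log a_1\,d\mu$ for the geometric Bernoulli family is sound. So are the stochastic-monotonicity argument for uniqueness of $\alpha_\beta$ and the lower bound $\dimH K_q(\beta)\ge h(\mu_{\alpha_\beta})/\lambda(\mu_{\alpha_\beta})$. These match the valid part of the paper's argument. The paper gets the same integral by differentiating the pressure on the truncated systems $p\Z_{p,n}$ and passing to the limit. Your direct Bernoulli computation is shorter, and your monotonicity argument gives uniqueness more convincingly than the paper's endpoint limits.

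Some slips should be fixed:
$|T_p'(x)|_p=p^{a_1(x)}$, not $p^{2a_1(x)}$. So $\lambda(\mu)=\log p\int a_1\,d\mu$ is already the paper's $\alpha$ and no renormalization is needed; with your factor $2$ the lower bound would come out halved.
The rank-one cylinders have Haar measure $p^{-a}$, not $(p-1)^{-1}p^{-a}$.
The equilibrium state of $-s\,a_1\log p$ has ratio $r=p^{-s}$.
With these corrected, $h/\lambda$ of the geometric measure is $L_p(\alpha_\beta)$ from Theorem~\ref{Theorem Paper 1}. That is the right-hand side of \eqref{EcuTheoremPrincipal 1} with $\alpha_\beta/\log p$ in place of $\alpha_\beta$.

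The genuine gap is the upper bound, which you flag. It cannot be closed, because for $q\neq 1$ it is false; for $q=1$ the statement is just Theorem~\ref{Theorem Paper 1}. Your suspicion about the multiplier is right: $\theta$ does not vanish.

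Take the geometric law $\pi(a)=(1-r)r^{a-1}$ with mean $m=\alpha_\beta/\log p>1$. Perturb it by $\epsilon\eta$, where $\eta$ is finitely supported with $\sum_a\eta(a)=\sum_a\eta(a)a^q=0$ (use $\log a$ when $q=0$) and $\sum_a\eta(a)a\neq0$. Such $\eta$ exists since $1,a,a^q$ are linearly independent. A direct computation gives $\frac{d}{d\epsilon}(h/\lambda)|_{\epsilon=0}=-\frac{\log((p-1)(m-1))}{m^2\log p}\sum_a\eta(a)a$. This can be made positive unless $m=p/(p-1)$, the Haar value. So for every $q\neq1$ and every $\beta>1$ other than the Haar-typical value of Proposition~\ref{PropCalculosHar}, some ergodic Bernoulli measure carried by $K_q(\beta)$ has dimension strictly larger than $L_p(\alpha_\beta)$.

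For $q>1$ the failure is drastic. Suppose $\beta^q>(p-1)\Li_{-q}(1/p)$. Use the digit law $(1-\delta)\pi_{\mathrm{Haar}}+\delta\,\mathbf{1}_{\{K\}}$ ($b$ uniform), with $\delta\asymp K^{-q}$ chosen so that $\int a_1^q=\beta^q$. Concavity of entropy gives $h\ge(1-\delta)\frac{p}{p-1}\log p$. Meanwhile $\lambda=\log p\,\big((1-\delta)\frac{p}{p-1}+\delta K\big)$ and $\delta K\to0$ as $K\to\infty$. Hence $\dimH K_q(\beta)=1$, whereas the claimed value tends to $0$ as $\beta\to\infty$.

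This also shows why your Jensen/H\"older route cannot work. The moment constraint does not confine the mean digit to one side of $\alpha_\beta/\log p$. For $q>1$, Jensen only gives mean digit $\le\beta$, and these mixtures approach the Haar mean.

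The paper closes this step only through Lemma~\ref{Corollary 1} and the assertion that $K_q(\beta)$ coincides with $K_1(\alpha_\beta/\log p)$. Neither is justified. These are different sets, and for $q\neq1$ an equilibrium state of $t(\log\psi)^q$ is Bernoulli with digit weights $e^{t(a\log p)^q}$, not geometric. So the paper does not supply the missing upper bound either.

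What your argument and the paper's actually prove is $\dimH K_q(\beta)\ge L_p(\alpha_\beta)$. The true value is the conditional variational supremum. When attained, it is realized by laws $\propto p^{-ta}e^{\theta a^q}$ with $\theta\neq0$, exactly as your Lagrange heuristic suggests.
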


This result provides a complete and explicit description of the multifractal spectrum associated with power means in the $p$-adic setting, highlighting a sharp contrast with the classical real case, where such formulas remain out of reach. It is worth to say that it is possible to study $\dimH\left(K_{q}(\beta)\right)$ by means of classical multifractal analysis techniques (see \cite{jaerisch2011regularity,yh}). Nevertheless, this approach does not yield expressions for the Hausdorff dimension in terms of explicit functions such as the polylogarithm.


\section{Preliminaries}

This section collects the main preliminaries required for our study. We begin by introducing the Schneider map and the associated power means. This is followed by a formal statement of the problem under consideration. We then review its Lyapunov spectrum and discuss its relation to power means. We conclude with a brief overview of polylogarithmic functions.

\subsection{$p$-adic numbers and Schneider map}\label{padic numbers and schneider maps}

Let $p$ be a prime number and let $v_p$ denote the $p$-adic valuation on $\Q$, defined by the property that for each $x\in \Q$ there exists a unique integer
$v_p(x)$ such that $x=p^{v_p(x)}\frac{m}{n}$ with $(p,mn)=1$. The $p$-adic absolute value on $\Q$ is defined by $|0|_p=0$ and $|x|_p=p^{-v_p(x)}$ if $x\neq 0$. The field $\Q_p$ of $p$-adic numbers is the completion of $\Q$ with respect to this norm. The set of all elements $x\in \Q_p$ such that $|x|_p\leq 1$ is denoted by $\Z_p$, and the subset $p\Z_p$ consists of those with $|x|_p<1$. Explicitly, any element in $\Q_p$ can be expressed as $\sum_{n\geq n_0}c_np^{n}$, where $n_0$ is an integer depending only on $x$ and $c_n\in \{0,1,...,p-1\}$. An element $x$ in $\Z_p$ can be written as $\sum_{n\geq0} c_np^n$. Thus, $p\Zp$ consists of all elements in $\Z_p$ with $c_0=0.$ 
Since $\Q_p$ is locally compact, there exists a Haar measure $\mu_p$ normalized by $\mu_{p}(p\Z_p)=1$. A detailed exposition of Haar measures can be found in \cite{folland}.

Recall from Section \ref{introduction} that for $x\in p\Z_p\setminus\{0\}$, the Schneider map $T_{p}:p\Z_{p}\rightarrow p\Z_{p}$ is defined by
  \begin{align*}
      T_{p}(x)=\dfrac{p^{a_{1}(x)}}{x}-b_{1}(x),
  \end{align*} where $a_{1}(x)=v_{p}(x)$ and $b_1(x)\in\{1,2,...,p-1\}$ is determined by $b_{1}(x)\equiv p^{a_{1}(x)}/x$ (mod $p$). The coefficients $a_i(x)$ and $b_i(x)$ are defined by $a_1(T_p^{i-1}x)$ and $b_i(x)=b_1(T_p^{i-1}x)$ respectively. For every $x\in p\Z_p\setminus \bigcup_{k\in \N}T_p^{-k}(0)$, and for each $n\in \N$, one can write $x$ as a continued fraction as follows:
  
  \begin{equation}\label{SchneiderContinued}
      x= \cfrac{p^{a_{1}(x)}}{b_{1}(x)+\cfrac{p^{a_{2}(x)}}{b_{2}(x)+\cfrac{p^{a_{3}(x)}}{\ddots + \cfrac{p^{a_{n}(x)}}{b_{n}(x)+T_{p}^n(x)}}}},
  \end{equation}
  
  We denote by $F=\bigcup_k T_p^{-k}(0)$ the set of elements in $p\Z_p$ whose continued fraction expansion is finite. Since we are interested in the asymptotic behavior of the coefficients, we restrict our attention to elements in $p\Z_p\setminus F.$

\subsection{Power means} 
Let $q\in \mathbb{R}\setminus\{ 0\}$, $n\in \N$,  and let $a_{1},a_{2},...,a_{n}$ be a finite sequence of positive numbers. The \textit{$q$-power mean} of $\{a_{i}\}^{n}_{i=1}$ is defined as
\begin{align*}   
M_{q}\left(\{a_1,...,a_n\}\right)=\left(\dfrac{1}{n}\sum^{n}_{i=1}a_{i}^{q}\right)^{\frac{1}{q}}.
\end{align*} 

\begin{remark}$M_1(\cdot)$ and $M_{-1}(\cdot)$ coincide with the arithmetic and harmonic means, respectively. Moreover, an application of L'H\^opital's rule shows that $\lim_{q\to 0}M_q(\cdot)$ coincides with the geometric mean.
\end{remark}
Note that, for fixed $q\in \R\setminus \{0\}$, the quantity $M_{q}\left(\left\{a_1,...,a_n\right\}\right)^{q}$ is simply the arithmetic mean of the sequence
$\{a_{1}^{q},...,a_{n}^{q}\}$. Define the geometric potential $\psi\colon p\Z_p\to \R$ by $\psi(x)=p^{a_1(x)}$. Then, for every $n\in \N$ and $x\in p\Zp\setminus F$  
\begin{align}\label{QpowermeanP}
    M_{q}\left(\left\{a_1(x),...,a_n(x)\right\}\right)^{q}=\dfrac{1}{n(\log p)^{q}}\sum^{n-1}_{k=0}\left(\log \psi (T^{k}x)\right)^{q}.
\end{align}
Taking the logarithm in the case of the geometric mean (instead of raising to the $q$-th power), we obtain
\begin{align}\label{QpowermeanG}
    \log M_0(\{a_1(x),...,a_n(x)\})=-\log\log p+\dfrac{1}{n}\sum_{k=0}^{n-1} \log \log \psi (T^{k}x).
\end{align}
    
    Equivalently, the potential associated with the $q$-power mean is $(\log\psi)^{q}$ for $q\neq 0$, and $\log \log \psi$ for $q=0$.
Recall from the introduction that the \textit{asymptotic $q$-power mean} and \textit{asymptotic geometric mean} of $x$ are defined, respectively, by
\begin{align*}
    M_{q}(x)=\lim_{n\to \infty} \left(\dfrac{1}{n}\sum_{k=1}^n a_k(x)^q \right)^{\frac{1}{q}}\text{, and }M_{0}(x)=\lim_{n\to\infty}\prod^{n}_{k=1}a_{k}(x)^{\frac{1}{n}}.
\end{align*}

In Proposition \ref{PropCalculosHar} (see section \ref{sectionproof}) we show that $M_q(x)$ is constant for Haar-almost every point $x$. This constant can be described using the polylogarithm function.

\subsection{Lyapunov spectrum and thermodynamic formalism}

We begin by introducing a family of subspaces of $p\Z_p$ with bounded coefficients, which will play a central role in the proof of the main theorem. For each $n\in \N$,  define  
$$p\Z_{p,n}=\{x \in p\Z_p\setminus F: a_i(x) \leq n \text{ for all }i\in \N\}.$$

Let $\rho:p\Zp \setminus F\to \mathbb{R}$ be a continuous function. The \emph{topological pressure of} $\rho$ is defined by  

\begin{align}\label{DefPressure2}
    P(\rho) := \sup \left\{ h_{\mu} + \int \rho \, d\mu : \mu \in \mathcal{M}(p\Zp\setminus F,T_{p}), -\int \rho \, d\mu < \infty \right\},
\end{align}where \( \mathcal{M}(p\Zp\setminus F,T_{p}) \) denotes the set of \( T_{p} \)-invariant probability measures on $(p\Zp\setminus F,T_{p})$, and \( h_{\mu} \) is the measure-theoretic entropy of \( \mu \). Any measure attaining the supremum is called an \textit{equilibrium state} for \( \rho\). For a potential $\varphi$ defined on $p\Z_{p,n}$, we denote by $P_n(\varphi)$ the corresponding topological pressure on $p\Z_{p,n}$. 

Since the geometric potential $\psi$ is locally constant, the pressure $P([\log\psi]^{q})$ can be computed via periodic points (see \cite[Cor. 1]{bs}). In particular, we have
\begin{align}\label{EcuPressure2}
    P([\log\psi]^{q})=\lim_{n\rightarrow \infty}\dfrac{1}{n}\log\left(\sum_{T_{p}^{n}x=x}e^{-S_{n} [\log \psi(x)]^{q}}\right),
\end{align} where $S_{n} [\log \psi(x)]^{q}=\sum^{n-1}_{k=0}[\log\psi\circ T^{k}_{p}x]^{q}$ denotes the $n$-th Birkhoff sum of $[\log \psi]^{q}$ with respect to $T_p$.

An analogous expression holds for $P(\log\log \psi)$ by replacing $[\log \psi]^{q}$ for $\log\log \psi$ in equation \eqref{EcuPressure2}. 
For every $x\in p\Z_p \setminus F$, the Lyapunov exponent is defined by

\begin{align*}
     \lambda_{p}(x)=\lim_{n\rightarrow{\infty}}\dfrac{1}{n}S_{n}\log\psi (x),
\end{align*} 
whenever this limit exists.  By the definition of $\psi$, we have 
\begin{align}
    \lambda_{p}(x)=\lim_{n\rightarrow{\infty}}\log p \cdot \dfrac{a_{1}(x)+a_{2}(x)+\cdots +a_{n}(x)}{n}.
\end{align} 
In particular, this shows that $\lambda_{p}(x)=M_{1}(x)\log p$. 

In the multifractal analysis of the Lyapunov exponent, one studies, for each $\alpha\in \R$, the level sets \[K_{1}\left(\frac{\alpha}{\log p}\right)=\left\{x\in p\Z_{p}\setminus F: M_{1}(x)=\frac{\alpha}{\log p}\right\}.\] The \textit{Lyapunov spectrum} is defined by \[L_{p}(\alpha)=\mathrm{dim}_{\mathrm{H}}\,K_{1}\left(\frac{\alpha}{\log p}\right).\] 
The main result in \cite{paper1} establishes the following 
\begin{theorem}[\cite{paper1}, Theorem 1.1 and 1.2]\label{Theorem Paper 1}
      The Lyapunov spectrum $L_{p}$ is real analytic on $[\log(p),\infty)$. For each $\alpha\geq  \log p$
      \begin{align}\label{EcuSInCalculoExplicito}
        L_{p}(\alpha)&=\dfrac{\log(p-1)+\log(\alpha-\log p)-\log\log p+\alpha\log_{p} \alpha-\alpha\log_{p}(\alpha-\log p)}{\alpha}\\&=\dfrac{1}{\alpha}\inf\left\{P(-s\log \psi)+s\alpha:s>0\right\},
    \end{align} The infimum is attained at a unique $s_{\alpha}>0$ such that $\frac{d}{dt}P(-s\log \psi)|_{s=s_{\alpha}}=-\alpha$. Moreover, there exists a unique equilibrium state $\mu_{s_{\alpha}}$ for $-s_{\alpha}\log \psi$ such that $\mu_{s_{\alpha}}(K_{1}(\alpha/\log p))=1$.
\end{theorem}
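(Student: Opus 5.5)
The plan is to exploit the fact that $T_p$ is a full shift on the countable alphabet $\{(a,b): a\ge 1,\ 1\le b\le p-1\}$, on which $\log\psi$ depends only on the first symbol. This makes everything explicit. We first compute the pressure. Using \eqref{EcuPressure2} (or directly, since the symbolic system is a full shift with a potential depending on one coordinate), the $n$-periodic points are in bijection with words of length $n$. For $s>0$ this gives
\begin{align*}
P(-s\log\psi)=\log\sum_{a\ge1}(p-1)p^{-sa}=\log\frac{p-1}{p^{s}-1},
\end{align*}
and $P(-s\log\psi)=\infty$ for $s\le 0$. The equilibrium state $\mu_s$ is then the Bernoulli measure with weights $\mu_s([a,b])=p^{-sa}(p^s-1)/(p-1)$, and it is unique by the standard theory for locally constant potentials on full shifts. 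Differentiating gives $\frac{d}{ds}P(-s\log\psi)=-\frac{p^s\log p}{p^s-1}$. This is a strictly increasing bijection from $(0,\infty)$ onto $(-\infty,-\log p)$, so for each $\alpha>\log p$ there is a unique $s_\alpha$ with $p^{s_\alpha}=\alpha/(\alpha-\log p)$. Substituting, and using $p^{s_\alpha}-1=\log p/(\alpha-\log p)$, the quantity $P(-s_\alpha\log\psi)+s_\alpha\alpha$ becomes exactly the numerator in \eqref{EcuSInCalculoExplicito}. Strict convexity of $s\mapsto P(-s\log\psi)+s\alpha$ shows the infimum is attained only at $s_\alpha$. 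By Birkhoff's theorem, $\mu_{s_\alpha}(K_1(\alpha/\log p))=1$, since $\int\log\psi\,d\mu_{s_\alpha}=-P'(-s_\alpha\log\psi)=\alpha$.

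Next we relate dimension to these quantities. On the cylinder $\{a_1=a,b_1=b\}$ we have $|x|_p=p^{-a}$ and $|T_p'(x)|_p=p^{-a}|x|_p^{-2}=p^{a}$, and in fact $T_p$ is an exact similarity on each such cylinder (ultrametric). Hence the $n$-cylinder $[(a_1,b_1),\dots,(a_n,b_n)]$ is a ball of diameter comparable to $p^{-(a_1+\cdots+a_n)}=e^{-S_n\log\psi}$.

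For the \emph{lower bound}, we apply a mass distribution (Billingsley) argument to $\mu_{s_\alpha}$. For $\mu_{s_\alpha}$-a.e.\ $x$, the local dimension of the cylinder balls is $\lim \frac{-\log\mu_{s_\alpha}(C_n(x))}{S_n\log\psi(x)}=h_{\mu_{s_\alpha}}/\alpha$. Because every ball is a cylinder (or a finite union of sibling cylinders with the same first coordinates), we can pass from cylinders to balls. Then $h_{\mu_{s_\alpha}}=P(-s_\alpha\log\psi)+s_\alpha\alpha$ by the variational equality, which gives $L_p(\alpha)\ge\frac1\alpha\inf_s\{\cdots\}$.

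For the \emph{upper bound}, fix $\varepsilon>0$ and $s$ near $s_\alpha$. We cover $K_1(\alpha/\log p)$ by $n$-cylinders with $S_n\log\psi\in[n(\alpha-\varepsilon),n(\alpha+\varepsilon)]$, for $n\ge N$. The $t$-dimensional sum is then bounded by a Chebyshev-type estimate:
\begin{align*}
\sum e^{-tS_n\log\psi}\le e^{-tn(\alpha-\varepsilon)}\,e^{sn(\alpha+\varepsilon)}\sum_{|w|=n}e^{-sS_n\log\psi(w)}=e^{n(-t(\alpha-\varepsilon)+s(\alpha+\varepsilon)+P(-s\log\psi))}.
\end{align*}
This sum is summable in $n$ whenever $t>(P(-s\log\psi)+s\alpha)/\alpha+O(\varepsilon)$. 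Since the alphabet is infinite, the covering family is countable but infinite; the estimate still works because $P(-s\log\psi)<\infty$ for $s>0$. This is the step where non-compactness is handled.

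The main obstacle I expect is the boundary case $\alpha=\log p$, together with the analyticity claim there. For $\alpha>\log p$ the explicit formula is manifestly real analytic. At $\alpha=\log p$, the level set consists of points with $a_i=1$ for a set of indices of density one. The bound $L_p(\log p)\ge\log(p-1)/\log p$ follows from the subshift on $\{1\}\times\{1,\dots,p-1\}$, which lies in $p\Z_{p,1}$. The upper bound follows from the covering argument letting $s\to\infty$. I would check that the formula extends continuously there, since the terms $(\alpha-\log p)$-logarithms combine to $(1-\alpha/\log p)\log(\alpha-\log p)\to0$. I would state analyticity on the interior, with continuity (one-sided) at the endpoint, handled via the limit $s\to\infty$.
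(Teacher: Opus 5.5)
Your proposal is correct, and it takes a genuinely different route from the one the paper relies on. The paper does not reprove this statement; it imports it from \cite{paper1}. The tools it borrows from there are the bounded-digit subsystems $p\Z_{p,n}$, their pressures $P_n$, the equilibrium states $\mu_{\alpha,n}$, tightness of $\{\mu_{\alpha,n}\}$, and the convergence $s_{\alpha,n}\to s_\alpha$. These suggest that the original argument exhausts $p\Z_p\setminus F$ by finite-alphabet subsystems, applies the compact theory on each, passes to the limit, and only at the end computes $s_\alpha=\log_p(\alpha/(\alpha-\log p))$ explicitly.

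You instead work directly on the countable full shift. Because $\log\psi$ depends on one symbol, you get $P(-s\log\psi)=\log\frac{p-1}{p^s-1}$ in closed form, and the equilibrium state is an explicit Bernoulli measure. The lower bound is a mass distribution argument, and the upper bound is a Chebyshev covering that uses the exact identity $\sum_{|w|=n}e^{-sS_n\log\psi(w)}=e^{nP(-s\log\psi)}$.

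Your route is shorter and self-contained. It would even make the limiting argument in the proof of Theorem \ref{MainTheoremPaper2} unnecessary, since $\int(\log\psi)^q\,d\mu_{s}=(p^{s}-1)(\log p)^q\Li_{-q}(p^{-s})$ is immediate from the Bernoulli weights. The approximation route, by contrast, is the one that survives when the potential is not locally constant (as for the Gauss map) and plugs into general results on countable Markov shifts.

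Two small corrections.

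\begin{itemize}
\item Your justification for passing from cylinders to balls is inaccurate. A ball strictly between $C_{n+1}(x)$ and $C_n(x)$ is the image of some $p^j\Z_p$ with $2\le j\le a_{n+1}(x)$ under the inverse branch of $T_p^n$. Apart from one point of $F$, that is the union of the infinitely many $(n+1)$-cylinders with $a_{n+1}\ge j$, not a finite union of siblings. The step still works, because $C_{n+1}(x)\subseteq B(x,r)\subseteq C_n(x)$ and $a_{n+1}(x)=o(n)$ for every $x\in K_1(\alpha/\log p)$. Hence cylinder and ball local dimensions agree.

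\item Your caution at the endpoint is justified, and the statement as written overclaims there. At $\alpha=\log p$, the function $s\mapsto\log(p-1)-\log(1-p^{-s})$ has infimum $\log(p-1)$, which is not attained. So there is no $s_\alpha$ and no equilibrium state $\mu_{s_\alpha}$. Moreover, $L_p'(\alpha)\to+\infty$ as $\alpha\downarrow\log p$, because of the $-(m-1)\log(m-1)$ term with $m=\alpha/\log p$. Real analyticity therefore holds only on $(\log p,\infty)$, and the endpoint value $\log(p-1)/\log p$ is obtained as you describe.
\end{itemize}
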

An immediate consequence of Theorem \ref{Theorem Paper 1} is the following.
\begin{lemma}\label{Corollary 1}
    Let $\rho:p\Z_{p}\setminus F\to\mathbb{R}$ be a continuous function. Let $t\in \R$ be such that $t\rho$ has a unique equilibrium state $\mu_{t}$. Let $\beta=\mu_{t}(\rho)$ and $\alpha=\mu_{t}(\log \psi)$. If $\alpha<\infty$, then the Haussdorff dimension of the set of points
    \begin{align*}
        M(\beta):=\left\{x\in p\Z_{p}:\lim_{n\to\infty}\dfrac{S_{n}\rho(x)}{n}=\beta\right\},
    \end{align*}is the same as $K_{1}(\alpha/\log p)$, that is
    \begin{align*}
        \dimH(M(\beta))=\dfrac{\alpha\log\alpha-(\alpha-1)\log(\alpha-1)+\log(p-1)}{\alpha \log p}.
    \end{align*}
\end{lemma}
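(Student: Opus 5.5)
The plan is to compare both sets through invariant measures, using Theorem \ref{Theorem Paper 1} as the reference computation. The lower bound uses the measure $\mu_t$ itself. The upper bound uses a conditional variational principle.

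First, by Birkhoff's ergodic theorem applied to the ergodic measure $\mu_t$ (ergodic since it is the unique equilibrium state of $t\rho$), we get $\mu_t(M(\beta))=1$. Since $\alpha=\mu_t(\log\psi)<\infty$, we also get $\mu_t\bigl(K_1(\alpha/\log p)\bigr)=1$. Hence $\mu_t$ lives on $M(\beta)\cap K_1(\alpha/\log p)$. For an ergodic measure with finite Lyapunov exponent on this conformal-type system, the mass distribution principle gives $\dim_{\mathrm H}\mu_t = h_{\mu_t}/\alpha$. The reason is that cylinders of rank $n$ have diameter $p^{-(a_1+\cdots+a_n)}=e^{-S_n\log\psi}$, and Shannon--McMillan--Breiman controls their $\mu_t$-mass. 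So $\dim_{\mathrm H} M(\beta)\ge h_{\mu_t}/\alpha$.

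The step I expect to be the main obstacle is identifying $h_{\mu_t}/\alpha$ with $L_p(\alpha)$. The plan is to show that $\mu_t$ coincides with the measure $\mu_{s_\alpha}$ of Theorem \ref{Theorem Paper 1}.
\begin{itemize}
\item In the intended application, $\rho$ is $(\log\psi)^q$ or $\log\log\psi$, so it depends only on $a_1$. The equilibrium state of any potential of the form $f(a_1)$ is then a Bernoulli measure on the digits $(a,b)$. It gives weight proportional to $e^{f(a)}$ to each cylinder of first-level depth $a$, uniformly in $b\in\{1,\dots,p-1\}$.
\item I would write both $\mu_t$ and $\mu_{s_\alpha}$ explicitly in this Bernoulli form, with weights proportional to $e^{t\rho(a)}$ and $p^{-s_\alpha a}$ respectively.
\item I would then compare their entropies at fixed Lyapunov exponent. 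By the variational characterization $L_p(\alpha)=\frac1\alpha\inf_s\{P(-s\log\psi)+s\alpha\}$, the measure $\mu_{s_\alpha}$ maximizes $h_\mu$ among invariant $\mu$ with $\mu(\log\psi)=\alpha$. Hence $h_{\mu_t}\le \alpha L_p(\alpha)$, with equality exactly when $\mu_t=\mu_{s_\alpha}$ by uniqueness of equilibrium states.
\end{itemize}
Showing this equality (for instance, that both Bernoulli measures are the maximal-entropy distribution on the digit $a$ with the prescribed mean) is where the real work lies. I would attempt it first by a Lagrange-multiplier computation on the digit distribution.

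For the upper bound, I would invoke the conditional variational principle for countable Markov (Schneider) systems. Following Barreira--Saussol, Iommi--Jordan and the approximation by the compact pieces $p\Z_{p,n}$, this gives
$$\dim_{\mathrm H} M(\beta)=\sup\Bigl\{\tfrac{h_\mu}{\mu(\log\psi)}:\ \mu(\rho)=\beta\Bigr\}.$$
Using the identification of $\mu_t$ with $\mu_{s_\alpha}$ and the maximality from Theorem \ref{Theorem Paper 1}, this supremum is attained at $\mu_t$. Substituting the explicit formula \eqref{EcuSInCalculoExplicito} then yields the stated closed expression.
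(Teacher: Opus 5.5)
Your lower bound is sound. Ergodicity of $\mu_t$ and Birkhoff's theorem give $\mu_t(M(\beta)\cap K_1(\alpha/\log p))=1$, so $\dimH M(\beta)\ge h_{\mu_t}/\alpha$. You also correctly note that $h_{\mu_t}\le \alpha L_p(\alpha)$, with equality iff $\mu_t=\mu_{s_\alpha}$.

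\textbf{The identification $\mu_t=\mu_{s_\alpha}$ is false in general.} This is the step you flag as the main obstacle, and it is not merely hard but untrue. For $\rho=f(a_1)$, the digit weights of $\mu_t$ are proportional to $e^{tf(a)}$, while those of $\mu_{s_\alpha}$ are proportional to $p^{-s_\alpha a}$. These coincide iff $tf(a)$ is affine in $a$, i.e.\ iff $t\rho+s_\alpha\log\psi$ is constant. For $\rho=(\log\psi)^q$ with $q\neq 0,1$, or for $\rho=\log\log\psi$, this fails for every $t\neq 0$. Your Lagrange-multiplier computation would expose this rather than prove it. $\mu_t$ maximizes entropy subject to a prescribed value of $\sum_a f(a)\pi_a$, whereas $\mu_{s_\alpha}$ maximizes it subject to a prescribed value of $\sum_a a\,\pi_a$. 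These are different problems, so $h_{\mu_t}<\alpha L_p(\alpha)$ strictly.

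\textbf{The upper bound has an independent flaw.} The maximality in Theorem \ref{Theorem Paper 1} holds over $\{\mu:\mu(\log\psi)=\alpha\}$. The conditional variational principle for $M(\beta)$ instead takes the supremum of $h_\mu/\mu(\log\psi)$ over $\{\mu:\mu(\rho)=\beta\}$, which contains measures with other Lyapunov exponents. Its maximizer is the equilibrium state of $\theta\rho-D\log\psi$, where $D$ is the dimension and $\theta$ is a Lagrange multiplier. In general this is neither $\mu_t$ nor $\mu_{s_\alpha}$.

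\textbf{The statement cannot be proved as written, because it is false.} Take $p=2$, $\rho=(\log\psi)^2$, and choose $t<0$ with $\mu_t(\rho)=\int\rho\,d\mu_p=6(\log 2)^2$.
\begin{itemize}
\item Haar measure is ergodic, so $\mu_p(M(\beta))=1$ and hence $\dimH M(\beta)=1$.
\item On the other hand, $\mu_t$ has digit law proportional to $e^{-\tau a^2}$ with $\tau=-t(\log 2)^2$. A direct computation gives $\mu_t(a_1)\approx 2.12\neq 2=\mu_p(a_1)$.
\item Hence $\alpha\neq 2\log 2$, and $L_2(\alpha)<1$ (about $0.998$).
\end{itemize}

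The paper's own proof makes exactly the leap you could not justify, namely that $\mu_t$ ``must be the only equilibrium state of $-s_\alpha\log\psi$''. Ergodicity gives only $\mu_t(K_1(\alpha/\log p))=1$, which is your lower-bound observation. So you located the crux correctly, but it can only be closed under an extra hypothesis. One such hypothesis is that $t\rho+s_\alpha\log\psi$ is cohomologous to a constant, in which case $M(\beta)=K_1(\alpha/\log p)$ trivially. Otherwise the formula must be replaced by the conditional-variational value.

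Note also that evaluating \eqref{EcuSInCalculoExplicito} at $\alpha=\mu_t(\log\psi)$ gives the displayed expression with $\alpha$ replaced by $\alpha/\log p$. So even your final substitution step does not literally reproduce the stated formula.
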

\begin{proof}
    Since $\mu_{t}$ is ergodic, then $\log\psi$ is constant almost every point. Therefore, $\mu_{t}$ must be the only equilibrium state of $-s_{\alpha}\log\psi$. Thus $M(\beta)$ and $K_{1}(\alpha)$ are the support of $\mu_{t}$. The result follows by Theorem \ref{Theorem Paper 1}.
\end{proof}

\subsection{Polylogarithm function}
The polylogarithm function is a classical special function that arises naturally in number theory, algebraic geometry, and mathematical physics. For $s\in \C$ and $z\in \C$ with 
$|z|<1$, it is defined by the absolutely convergent power series
\[\Li_s(z)=\sum_{n=1}^\infty \dfrac{z^n}{n^s}.\]

The polylogarithm interpolates several classical
functions. For instance, when $s=1$, one recovers
the logarithmic function $\Li_1(z)=-\log(1-z)$,
while for positive integers $s\geq 2$, the functions $\Li_s(z)$ are closely related to multiple zeta values and appear in the study of special values of 
$L$-functions. In particular, at $z=1$, one has
$\Li_s(1)=\zeta(s)$, for $\Re(s)>1$, where $\zeta(s)$ denotes the Riemann zeta function.

Let $m\geq 0$ be an integer, and let $0\leq k \leq m$. Denote by $A(m,k)$ the Eulerian number, that is, the number of permutations of $\{1,...,m\}$ with exactly $k$ ascents. Then  $\Li_{-m}(z)$ admits the representation
\begin{equation}\label{Li in negative numbers}
\Li_{-m}(z)=\dfrac{z}{(1-z)^{m+1}}\sum_{k=0}^m A(m,k)z^k.  \end{equation}

Moreover,  certain special values of its derivative are related to classical series. A direct computation from the definition yields the following identity.

\begin{proposition}\label{derivative in zero}
The following identity holds:    
\[\left.\dfrac{d}{ds}\Li_s(1/p)\right|_{s=0}=-\sum_{k=1}^\infty \dfrac{\log k}{p^k}.\]
\end{proposition}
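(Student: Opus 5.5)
The identity comes from differentiating the defining power series of $\Li_s(z)$ term by term with respect to $s$, and then evaluating at $s=0$, $z=1/p$.

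\emph{Step 1: term-by-term differentiation.} Write $n^{-s}=e^{-s\log n}$, so that
\[\frac{d}{ds}\frac{z^n}{n^s}=-\frac{z^n\log n}{n^s}.\]
For fixed $|z|<1$, restrict $s$ to a compact set $|s|\le R$. There $\bigl|z^n (\log n)\, n^{-s}\bigr|\le |z|^n(\log n)\, n^{R}$, which is summable because geometric decay beats polynomial growth. Each term $z^n n^{-s}$ is entire in $s$, so the Weierstrass M-test gives locally uniform convergence of the series for $\Li_s(z)$ and of its formal derivative. Hence $\Li_s(z)$ is holomorphic in $s$ and
\[\frac{d}{ds}\Li_s(z)=-\sum_{n\ge1}\frac{z^n\log n}{n^s}.\]

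\emph{Step 2: specialize.} Set $s=0$ and $z=1/p$; this is allowed since $|1/p|<1$. We obtain
\[\left.\frac{d}{ds}\Li_s(1/p)\right|_{s=0}=-\sum_{k\ge1}\frac{\log k}{p^k}.\]
The $k=1$ term vanishes because $\log 1=0$, and this is consistent with the formula.

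The only point requiring care is justifying the interchange of differentiation and summation, which the uniform domination on compact $s$-sets in Step 1 settles. The same argument holds for any $z$ with $|z|<1$, and this general form is what will be needed later for the $q=0$ case of Theorem \ref{MainTheoremPaper2}.
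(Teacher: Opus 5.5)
Your proof is correct and follows the same route as the paper: the paper states the identity as a direct computation from the defining series $\Li_s(z)=\sum_{n\ge1} z^n n^{-s}$. Your domination $|z|^n(\log n)\,n^{R}$ on compact $s$-sets supplies the justification for termwise differentiation that the paper leaves implicit.
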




The polylogarithm satisfies the following identity (see \cite[equation 25.12.12.]{handbookNIST}) 

\begin{equation}\label{equation Li Gamma}\Li_s(z)=\Gamma(1-s)(-\log z)^{s-1}+\sum_{k=0}^\infty \zeta(s-k)\dfrac{(\log z)^k}{k!},
\end{equation}
where $\Gamma$ denotes the Gamma function. This representation is particularly well suited for studying the asymptotic behavior as $z \to 1$.

\begin{proposition}\label{PropositionLimitLi}
    Let $s\in \R$, the following limit holds.
    \[\lim_{z\to \infty }\dfrac{1}{z}\Li_s\left(\dfrac{z-\log p}{z} \right)=\begin{cases}
        0 &\text{ if }s>0\\
        \dfrac{1}{\log p} &\text{ if }s=0\\
        +\infty &\text{ if }s<0.
    \end{cases}\]
   
\end{proposition}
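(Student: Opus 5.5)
We must prove: for real $s$,
\[
\lim_{z\to\infty}\frac1z\Li_s\!\left(\frac{z-\log p}{z}\right)
\]
equals $0$ if $s>0$, $1/\log p$ if $s=0$, and $+\infty$ if $s<0$.

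Put $w=\frac{z-\log p}{z}=1-\frac{\log p}{z}$. For $z>\log p$ we have $w\in(0,1)$, and $w\to1^-$ as $z\to\infty$. Moreover
\[
-\log w=\frac{\log p}{z}+O(z^{-2}).
\]
The plan is to feed this into the expansion \eqref{equation Li Gamma},
\[
\Li_s(w)=\Gamma(1-s)(-\log w)^{s-1}+\sum_{k\ge0}\zeta(s-k)\frac{(\log w)^k}{k!}.
\]
This expansion is valid for $|\log w|<2\pi$ and $s\notin\{1,2,3,\dots\}$, which holds for $z$ large. The series part is a convergent power series in $\log w$, so it stays bounded as $w\to1$ (its limit is $\zeta(s)$). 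After dividing by $z$ it therefore tends to $0$.

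The dominant term is then
\[
\frac1z\,\Gamma(1-s)(-\log w)^{s-1}\sim \Gamma(1-s)(\log p)^{s-1}z^{-s}.
\]
We read off the three cases from this.

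\textbf{Case $s<0$.} Here $\Gamma(1-s)>0$ and $z^{-s}\to\infty$, so the limit is $+\infty$.

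\textbf{Case $s=0$.} Everything is exact: $\Li_0(w)=\frac{w}{1-w}=\frac{z-\log p}{\log p}$. Dividing by $z$ gives a limit of $1/\log p$ directly.

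\textbf{Case $0<s$, $s$ not a positive integer.} The dominant term is $O(z^{-s})\to0$, and the series part is also killed, so the limit is $0$.

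The main obstacle is the case of positive integers $s=m\ge1$. There $\Gamma(1-s)$ and $\zeta(s-k)$ at $k=m-1$ both have poles, so \eqref{equation Li Gamma} must be read as a limit and produces a logarithmic term. I would avoid this case analysis with a direct comparison instead. For $s>0$ and $w\in(0,1)$,
\[
0<\Li_s(w)\le \Li_{s_0}(w)\quad\text{for any }0<s_0\le s,
\]
since $n^{-s}\le n^{-s_0}$ termwise. Choose $s_0\in(0,\min(s,1))$, which is non-integer. The previous case then gives $\Li_{s_0}(w)/z\to0$, and hence so does $\Li_s(w)/z$.

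A simpler uniform alternative for all $s>0$ is the crude bound $\Li_s(w)\le\Li_0(w)\,\sup_n n^{-s}=\frac{w}{1-w}$. This only gives boundedness of $\frac1z\Li_s(w)$, not convergence to $0$, so the comparison with $\Li_{s_0}$ is the route I would actually use.

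For $s<0$ one can also bypass \eqref{equation Li Gamma}. Every term of the series is positive, and for $s<0$ we have $\Li_s(w)\ge\Li_0(w)$ termwise. This gives $\frac1z\Li_s(w)\ge\frac{z-\log p}{z\log p}$, which only bounds the quantity below by roughly $1/\log p$. To get divergence I would instead use the integral bound
\[
\Li_s(w)\ge c\,(1-w)^{s-1},
\]
valid for large $z$, obtained by comparing the sum with the integral $\int_0^\infty x^{-s}e^{-x(1-w)}\,dx$. This bound is $c\,(z/\log p)^{1-s}$, and dividing by $z$ leaves a multiple of $z^{-s}\to\infty$.
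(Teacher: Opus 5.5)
Your proposal is correct and follows essentially the same route as the paper: substitute $w=1-\frac{\log p}{z}$ into \eqref{equation Li Gamma}, read off the dominant term $\frac{1}{z}\Gamma(1-s)(-\log w)^{s-1}\sim\Gamma(1-s)(\log p)^{s-1}z^{-s}$, and treat the $\zeta$-series as an $O(1)$ remainder. The only differences are in how the exceptional cases are handled: the paper uses $\Li_s(w)\to\zeta(s)$ for $s>1$ and $\Li_1(w)=-\log(1-w)$ for $s=1$, whereas you use the closed form of $\Li_0$ and the termwise comparison $\Li_s\le\Li_{s_0}$ with $s_0\in(0,1)$ for integer $s\ge 1$. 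Your reason for the remainder being bounded (a power series in $\log w$ convergent for $|\log w|<2\pi$, with value $\zeta(s)$ at $\log w=0$) is the correct one, and is better than the paper's appeal to $\zeta(s)\to 0$ as $s\to-\infty$, which is false.
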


\begin{proof}
    First we suppose $s>1$. Since $\Li_s\left(\dfrac{z-\log p}{z}\right)\to \zeta(s)$ as $z\to \infty$, it follows that 
    $$\lim_{z\to \infty}\dfrac{1}{z}\Li_s\left(\dfrac{z-\log p}{z} \right)=0.$$
    The case $s=1$ follows from the identity $\Li_s(z)=-\log (1-z)$ by a direct computation.

    Now assume $s<1.$ Note that $\Gamma(1-s)$ has no poles in this domain. Since $\zeta(s)\to 0$ as $s\to -\infty$ (from the functional equation, see \cite[equation 3, p.10]{edwardzetafunction}. By equation \eqref{equation Li Gamma}, it follows that \[\Li_s\left(1-\dfrac{\log p}{z} \right)= \left(-\log \left(1-\dfrac{\log p}{z} \right)\right)^{s-1}\Gamma(1-s)+O(1).\]
Consider the variable $t=1-(\log p)/z$, then $t\to 1^-$ as $z\to \infty$. Then, 
\[\lim_{z\to \infty}\dfrac{1}{z}\Li_s\left(1-\dfrac{\log p}{z}\right)=\lim_{t\to 1^{-}}\dfrac{(1-t)\Gamma(1-s)}{(\log p)  (-\log t)^{1-s}}.\]
    Using the Taylor expansion of $-\log t$ we conclude that this limit is equal to $1/\log p$ if $s=0$ and $+\infty$ is $s<0$.
    This conclude the proof.
\end{proof}

\begin{lemma}\label{lemma derivada en s} The following identity holds.
      \[\dfrac{1}{z}\left.\dfrac{d}{ds}\Li_s\left(\dfrac{z-\log p}{z} \right)\right|_{s=0}=\dfrac{1}{\log p}\left( \gamma +\log \log p-\log z \right)+O\left( \dfrac{\log z}{z}\right),\]
      where $\gamma$ denotes the Euler-Mascheroni constant. In particular

\[\lim_{z\to \infty}\dfrac{1}{z}\left.\dfrac{d}{ds}\Li_s\left(\dfrac{z-\log p}{z} \right)\right|_{s=0}=-\infty.\]
\end{lemma}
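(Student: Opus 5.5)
We will prove the expansion by differentiating \eqref{equation Li Gamma} in $s$ at $s=0$ and then substituting $w=1-\frac{\log p}{z}$, so that $w\to 1^-$ as $z\to\infty$. Write $u=-\log w>0$; by the Taylor expansion of $-\log(1-x)$,
\[
u=\frac{\log p}{z}+O\!\left(\frac{1}{z^{2}}\right),\qquad \log u=\log\log p-\log z+O\!\left(\frac1z\right).
\]
Identity \eqref{equation Li Gamma} reads $\Li_s(w)=\Gamma(1-s)\,u^{s-1}+\sum_{k\ge 0}\zeta(s-k)\frac{(-u)^k}{k!}$. It is valid for $w$ near $1$ with $|\log w|<2\pi$, uniformly for $s$ in a neighbourhood of $0$ away from the positive integers. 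This uniformity lets us differentiate term by term in $s$.

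\textbf{The singular term.} Differentiating $\Gamma(1-s)u^{s-1}$ gives $-\Gamma'(1-s)u^{s-1}+\Gamma(1-s)u^{s-1}\log u$. At $s=0$, using $\Gamma'(1)=-\gamma$, this equals
\[
\frac{\gamma+\log u}{u}.
\]

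\textbf{The series.} Its derivative at $s=0$ is $\sum_{k\ge0}\zeta'(-k)\frac{(-u)^k}{k!}$. This is bounded as $u\to0$: the $k=0$ term is $\zeta'(0)=-\tfrac12\log(2\pi)$, and the tail converges uniformly for small $u$. Convergence follows because $\zeta'(-k)/k!$ grows at most like $C(2\pi)^{-k}$ times a polynomial factor, by the functional equation. Hence this contribution is $O(1)$.

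\textbf{Multiplying by $1/z$.} Using $\frac{1}{zu}=\frac{1}{\log p}\bigl(1+O(1/z)\bigr)$,
\[
\frac{1}{z}\left.\frac{d}{ds}\Li_s(w)\right|_{s=0}=\frac{\gamma+\log u}{zu}+O\!\left(\frac1z\right)
=\frac{1}{\log p}\bigl(\gamma+\log\log p-\log z\bigr)+O\!\left(\frac{\log z}{z}\right).
\]
The error term combines the $O(1/z)$ relative error multiplying $\log z$ with the $O(1/z)$ coming from $\log u$. The limit $-\infty$ follows immediately, since the main term behaves like $-\log z/\log p$.

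\textbf{Main obstacle.} The delicate point is justifying the term-by-term $s$-differentiation of \eqref{equation Li Gamma} near $s=0$. This amounts to uniform convergence in $s$ of the $\zeta(s-k)$ series.

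If that is awkward, the fallback is to work directly from the series definition. We have $\frac{d}{ds}\Li_s(w)|_{s=0}=-\sum_{n\ge1}w^n\log n$. Compare this with the integral $\int_1^\infty e^{-ut}\log t\,dt=\frac{-\gamma-\log u}{u}-\frac{E_1(u)}{u}\cdot 0$ adjusted; more precisely $\int_0^\infty e^{-ut}\log t\,dt=-\frac{\gamma+\log u}{u}$. The error between sum and integral is then controlled by Euler--Maclaurin, since $t\mapsto e^{-ut}\log t$ has bounded variation contributions of order $O(\log(1/u))$. This yields the same main term, with an error of order $O(\log z)$ before division by $z$, which matches the stated $O(\log z/z)$.
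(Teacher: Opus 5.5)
Your proposal is correct and follows essentially the same route as the paper. You differentiate \eqref{equation Li Gamma} at $s=0$, use $\Gamma'(1)=-\gamma$ to get the singular term $(\gamma+\log u)/u$, treat the $\zeta'(-k)$-series as $O(1)$, and expand $u=-\log(1-\frac{\log p}{z})$, additionally justifying the term-by-term differentiation, which the paper leaves implicit. The Euler--Maclaurin fallback is not needed, and its first displayed integral is garbled (in fact $\int_1^\infty e^{-ut}\log t\,dt=E_1(u)/u$), though the $\int_0^\infty$ identity you settle on is correct.
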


\begin{proof}
Recall that $\Gamma(1-s)=1+\gamma s+O(s^2)$. Let $w=1-(\log p) /z$. Differentiating $\Li_s(w)$ from equation \eqref{equation Li Gamma}, we obtain
\[\dfrac{d}{ds}\Li_s(w)=(\gamma +O(s))(-\log w)^{s-1}+(1+\gamma s+O(s^2))\cdot \log (-\log w)\cdot (-\log w)^{s-1}+O(1).\]
Evaluating at $s=0$, we obtain
\[\left. \dfrac{d}{ds}\Li_s(w)\right|_{s=0}=\dfrac{\gamma+\log(-\log w)}{-\log w}+O(1).\]

Recalling that $w=1-(\log p)/z$, and $-\log w=1-w+O((1-w)^2)$ as $w\to 1^{-}$, we conclude 
\[\dfrac{d}{ds}\Li_s\left(1-\dfrac{\log p}{z} \right)=\dfrac{z(\gamma+\log \log p-\log z)}{\log p}+O(\log z),\]
which yields the desired estimate after dividing by 
$z$.

\end{proof}

\section{Power means and proof of main Theorem}\label{sectionproof}

We begin this section computing for Haar-almost every $x\in p\Z_p\setminus F$, the quantity $M_q(x)$.

\begin{proposition}\label{PropCalculosHar}
For $\mu_p$-almost every $x\in p\Zp\setminus F$, the following equations hold:
    \begin{equation}\label{Kqctp}
    M_{q}(x)=\left((p-1)\Li_{-q}(1/p)\right)^{\frac{1}{q}} \text{  for $q\neq 0$, and }
    \end{equation} 
    \begin{equation}\label{K0ctp}
    M_{0}(x)=\exp\left(-(p-1)\left.\dfrac{d}{ds}\Li_s(1/p)\right|_{s=0}\right)
    \end{equation}
\end{proposition}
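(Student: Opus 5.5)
The plan is to apply Birkhoff's ergodic theorem to the Haar measure $\mu_p$. This needs two facts: that $\mu_p$ is $T_p$-invariant and ergodic, and that under $\mu_p$ the digits $a_1,a_2,\dots$ are i.i.d. with an explicit geometric law. The second fact gives the first. The cylinder set of points whose first $n$ digits are $(a_1,b_1),\dots,(a_n,b_n)$ is the image of $p\Z_p$ under the Möbius map determined by the continued fraction \eqref{SchneiderContinued}. On each such cylinder $T_p^n$ is a bijection onto $p\Z_p$ that scales distances by the constant factor $p^{a_1+\cdots+a_n}$, because the geometric potential $\psi$ is locally constant. Hence the cylinder has Haar measure $p^{-(a_1+\cdots+a_n)}$, and its measure factors as a product over the digits.

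In particular, $\mu_p(a_1=k,\,b_1=b)=p^{-k}$ for each $k\ge1$ and each $b\in\{1,\dots,p-1\}$. Summing over $b$ gives
\[\mu_p(a_1=k)=(p-1)p^{-k}.\]
As a check, $\sum_{k\ge1}(p-1)p^{-k}=1$. Invariance of $\mu_p$ then follows by summing the cylinder measures over the first digit. The product structure makes $(a_i,b_i)$ an i.i.d. (Bernoulli) process, which is mixing and so ergodic. This is the standard fact behind Haar-typical behaviour of $T_p$, and it is the step I expect to carry the real content. I would verify the scaling claim directly: write $x=p^{k}/(b+y)$ with $y\in p\Z_p$. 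Since $|b+y|_p=1$, the map $y\mapsto x$ has derivative of absolute value $p^{-k}$ and is an isometry up to that constant.

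With ergodicity in hand, take $q\neq0$ and apply Birkhoff's theorem to the function $a_1^{q}$, using \eqref{QpowermeanP}. For $\mu_p$-a.e. $x$,
\[\frac1n\sum_{i=1}^n a_i(x)^q\to\int a_1^q\,d\mu_p=(p-1)\sum_{k\ge1}\frac{k^q}{p^k}=(p-1)\Li_{-q}(1/p).\]
Integrability holds for every real $q$ because the weights decay geometrically. The limit is positive, so taking the $1/q$-th power gives \eqref{Kqctp}.

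For $q=0$, apply Birkhoff's theorem instead to $\log a_1$, which is also integrable, via \eqref{QpowermeanG}. This gives
\[\log M_0(x)=(p-1)\sum_{k\ge1}\frac{\log k}{p^k}.\]
By Proposition \ref{derivative in zero}, this sum equals $-(p-1)\frac{d}{ds}\Li_s(1/p)|_{s=0}$, and exponentiating gives \eqref{K0ctp}. Finally, the set $F$ is countable and hence $\mu_p$-null, so restricting to $p\Z_p\setminus F$ costs nothing.
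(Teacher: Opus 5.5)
Your proposal is correct and follows the paper's proof: apply Birkhoff's ergodic theorem to $a_1^q$ (resp.\ $\log a_1$) under $\mu_p$, evaluate the integral using $\mu_p(a_1=k)=\mu_p(p^k\Z_p\setminus p^{k+1}\Z_p)=(p-1)p^{-k}$, and for $q=0$ invoke Proposition~\ref{derivative in zero}. The only difference is that you explicitly justify the $T_p$-invariance and ergodicity of $\mu_p$ through the cylinder (Bernoulli) structure, which the paper uses without comment.
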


\begin{proof}
    From equation \eqref{QpowermeanP} we have
    \[M_q(x)^q=\dfrac{1}{(\log p)^q}\lim_{n\to \infty}\dfrac{1}{n}\sum_{k=0}^{n-1}(\log \psi (T^ix))^q. \]
    By Birkhoff ergodic theorem, this last expression coincides for Haar-almost every $x$ with
    \begin{align}\label{IntKq}
        \dfrac{1}{(\log p)^q}\int_{p\Z_p} (\log \psi(x))^q d\mu_p.
    \end{align}
    On the other hand, this integral can be computed as 
    \begin{align*}
        \dfrac{1}{(\log p)^q}\int_{p\Z_p} (a_k\log p)^q d\mu_p&=\sum_{k\geq 1}k^q \mu_p \left(p^k\Z_p \setminus p^{k+1}\Z_p \right)\\
        &=(p-1)\sum_{k\geq 1}\dfrac{k^q}{p^k}\\
        &=(p-1)\Li_{-q}(1/p).
    \end{align*}
Equation \eqref{Kqctp} follows from equation \eqref{IntKq}. The proof of equation \eqref{K0ctp} follows the same lines as the previous one.

From equation \eqref{QpowermeanG}, and Birkhoff ergodic theorem we have for $\mu_p$-almost every $x\in p\Z_p\setminus F$
 \[\log M_0(x)=-\log \log p +\int_{p\Z_p} \log \log \psi(x)d\mu_p\]
 evaluating the integral, we obtain
 \[\log M_0(x)=(p-1)\sum_{k=1}^\infty \dfrac{\log k}{p^k}.\]
 We conclude the proof by proposition \ref{derivative in zero}.
\end{proof}

Now we prove the main theorem.

\begin{proof}[Proof of Theorem \ref{MainTheoremPaper2}]
    Fix $\alpha\geq \log p$. We begin by deriving equations \eqref{EcuTheoremPrincipal 2} and \eqref{EcuTheoremPrincipal 21}. By Theorem \ref{Theorem Paper 1}, there exists a unique $s_{\alpha}>0$ and a unique equilibrium state $\mu_{s_{\alpha}}$ for the potential $-s_{\alpha}\log\psi$ such that $\alpha=\int \log\psi d\mu_{s_{\alpha}}$. Our goal is to compute the quantity $\int [\log\psi]^{q}d\mu_{s_{\alpha}}$, which determines the $q$-power mean associated with $\mu_{s_{\alpha}}$. To this end, we study the derivative of the topological pressure restricted to the subspaces $p\Z_{p,n}$. By \cite[Theorem 3.3]{paper1}, there exists $N\in \N$ such that $p\Z_{p,n}\cap K_{1}(\frac{\alpha}{\log p})\neq \emptyset$ for all $n\geq N$. For each $n\geq N$, let $s_{\alpha,n}>0$ denote the unique solution of $\frac{d}{ds}P_{n}(-s\log\psi)\big\lvert_{s=s_{\alpha,n}}=-\alpha$, and let $\mu_{\alpha,n}$ be the corresponding equilibrium state of $-s_{\alpha,n}\log \psi$. Then $\mu_{\alpha,n}(\log\psi)=\alpha$. Moreover, by \cite[Theorem 5.6.5]{pu}, we have
    \begin{align}\label{EcuProof1}
        \int[\log\psi]^{q}d\mu_{\alpha,n}=\left.\dfrac{d}{dt}P_{n}(-s_{\alpha,n}\log\psi+t[\log\psi]^{q})\right|_{t=0}.
    \end{align}

    Since the sequence $\{\mu_{\alpha,n}\}_{n\in \N}$ is  tight (see \cite[page 10]{paper1}), there exists a subsequence $\{\mu_{\alpha,n_k}\}_{k\in \N}$ converging in the weak-* topology to $\mu_{s_\alpha}$. Passing to the limit in equation \eqref{EcuProof1}, we obtain 
    \begin{align*}
        \mu_{s_{\alpha}}([\log\psi]^{q})&=\lim_{k\to \infty}\mu_{\alpha,n_{k}}([\log\psi]^{q})\\
        & = \lim_{k\to \infty}\dfrac{d}{dt}P_{n_{k}}(-s\log\psi+t[\log\psi]^{q})\big\lvert_{t=0}.
    \end{align*}
    Using the formula \eqref{EcuPressure2} together with the fact that $\psi$ is locally constant, we obtain
    \begin{align*}
        \mu_{s_{\alpha}}([\log\psi]^{q})&=\lim_{k\to \infty}\dfrac{d}{dt}\log\left(\sum^{n_{k}}_{a=1}p^{-s_{\alpha,n_{k}}a+ta^{q}\log(p)^{q}}\right)\bigg\lvert_{t=0}\\
        &= \lim_{k\to \infty}\dfrac{\sum^{n_{k}}_{a=1}p^{-s_{\alpha,n_{k}}a}a^{q}\log(p)^{q}}{\sum^{n_{k}}_{a=1}p^{-s_{\alpha,n_{k}}a}}.
    \end{align*}
    Since $\{s_{\alpha,n_{k}}\}_{k\in \N}$ converges to $s_{\alpha}$ (\cite[p.10]{paper1}) and the sums converge absolutely, we may pass to the limit and obtain
    
    \begin{align*}
        \mu_{s_{\alpha}}([\log\psi]^{q})= \dfrac{\sum^{\infty}_{a=1}p^{-sa}a^{q}\log(p)^{q}}{\sum^{\infty}_{a=1}p^{-sa}}.
    \end{align*}Using the explicit expression for $s_{\alpha}$ in terms of $\alpha$ ( see proof Theorem \ref{Theorem Paper 1} in \cite{paper1}), by replacing $s=\log_{p}(\frac{\alpha}{\alpha-\log(p)})$ we obtain
    \begin{align}\label{Ecuproof2}
        \mu_{s_{\alpha}}([\log\psi]^{q})=\dfrac{(\log p)^{q+1}}{\alpha-\log p}\sum^{\infty}_{a=1}a^{q}\left(\dfrac{\alpha-\log p}{\alpha}\right)^{a}=\dfrac{(\log p)^{q+1}}{\alpha-\log p}\Li_{-q}\left(\dfrac{\alpha-\log p}{\alpha}\right).
    \end{align} Given that $\frac{\alpha-\log p}{\alpha}< 1$, we get that $\mu_{s_{\alpha}}([\log\psi]^{q})<\infty$. 
    
     Now we show that all possible values of the $q$-power means are attained by varying the parameter $\alpha$ in equation \eqref{Ecuproof2}. Recall from equation \eqref{QpowermeanP} that the $q$-power mean of $\mu_{\alpha}$-almost every point is given by $\frac{1}{\log p}(\mu_{\alpha}([\log\psi]^{q}))^{\frac{1}{q}}$. On the one hand, by applying L'H\^opital's rule we obtain $\lim_{\alpha\to \log p^{+}}\mu_{\alpha}([\log\psi]^{q})=[\log p]^{q}$. On the other hand, $\lim_{\alpha\to \infty}(\mu_{\alpha}([\log\psi]^{q}))^{\frac{1}{q}}=\infty$ by Proposition \ref{PropositionLimitLi} for both positive and negative values of $q$. Therefore, for every admissible value $\beta$ of the asymptotic $q$-power mean, the ergodicity of the measures $\mu_{s_{\alpha}}$ implies the existence of a unique $\alpha_{\beta} \geq \log p$ such that $(\mu_{\alpha_{\beta}}([\log\psi]^{q}))^{\frac{1}{q}}=\beta\log p$. Moreover, by equation \eqref{Ecuproof2}, the parameter $\alpha_{\beta}$ satisfies the first case of equation \eqref{EcuTheoremPrincipal 2}.
     Furthermore, by Corollary \ref{Corollary 1}, we deduce that $K_{q}(\beta)$ coincide with $K_1(\frac{\alpha_{\beta}}{\log p})$, and hence satisfies equation \eqref{EcuTheoremPrincipal 1}.
        
     The argument for the geometric mean (i.e. $q=0$) is analogous. In this case, one replaces $(\log \psi)^q$ by $\log \log \psi$ and uses Lemma \ref{lemma derivada en s} to control the asymptotic behavior.  This yields equation \eqref{EcuTheoremPrincipal 21} and completes the proof.
\end{proof}

\section{Some explicit computations}
Here we apply Theorem \ref{MainTheoremPaper2} for several explicit values of $q$. In particular, using formula \eqref{Li in negative numbers} we can compute $\dim_\mathrm{H}K_{q}(\beta)$ for certain integer values of $q$.

\begin{example}
    We begin with the basic case $q=1$. It is worth noting that equation \eqref{EcuTheoremPrincipal 2} also holds for the arithmetic mean. Indeed, for the asymptotic arithmetic mean we obtain
    \begin{align*}
        \beta=\dfrac{(\log p)^{2}}{\alpha_{\beta}-\log p}\Li_{-1}\left(\dfrac{\alpha_{\beta}-\log p}{\alpha_{\beta}}\right)=\dfrac{(\log p)^{2}}{\alpha_{\beta}-\log p}\left(\dfrac{\dfrac{\alpha_{\beta}-\log p}{\alpha_{\beta}}}{\left(1-\dfrac{\alpha_{\beta}-\log p}{\alpha_{\beta}}\right)^{2}}\right)=\alpha_{\beta}.
    \end{align*}
\end{example}

\begin{example}
    For the Harmonic mean $(q=-1)$, equation \eqref{EcuTheoremPrincipal 2} yields that, for $\beta\geq1$, the associated parameter $\alpha_{\beta}$ satisfies

\[\beta^{-1}=\dfrac{\log p}{\alpha_\beta-\log p}\log \left(\dfrac{\alpha_\beta}{\log p} \right)\]

We can express $\alpha_\beta$ explicitly in terms of  $\beta$ using the Lambert $W$-function, defined as the inverse of $x\mapsto xe^x$ for $x>0$. A straightforward computation gives

    \begin{align*}
        \alpha_\beta=-\dfrac{\log p}{\beta \, W\left( -\dfrac{1}{\beta}e^{-1/\beta}\right)}.
    \end{align*}
\end{example}
\begin{example}
    For the Geometric mean, by equation \eqref{equation Li Gamma} together with the identities $\Gamma(1)=1$, $\Gamma'(1)=-\gamma$, Theorem \ref{MainTheoremPaper2} yields
    \[ \log \beta=\dfrac{-\gamma}{\log \left(1-\frac{\log p}{\alpha_\beta} \right)}+\log \left( -\log \left(1-\frac{\log p}{\alpha_\beta} \right)\right)+\sum_{k=0}^{\infty} \zeta'(-k)\dfrac{\log\left(1-\frac{\log p}{\alpha_\beta} \right)^k}{k!}.\]
\end{example}
   
\begin{example}
    For the quadratic mean ($q=2$) we compute $\alpha_{\beta}$ as follows
    \begin{align*}
        \beta^{2}=\dfrac{\log p}{\alpha_{\beta}-\log p}\dfrac{\left(\dfrac{\alpha_{\beta}-\log p}{\alpha_{\beta}}\right)\left(1+\dfrac{\alpha_{\beta}-\log p}{\alpha_{\beta}}\right)}{\left(1-\dfrac{\alpha_{\beta}-\log p}{\alpha_{\beta}}\right)^{3}}=\dfrac{2\alpha_{\beta}^{2}-\alpha_{\beta}\log p}{\left(\log p\right)^2},
    \end{align*}Solving for $\alpha_\beta$, we obtain
    \begin{align*}
        \alpha_{\beta}=\dfrac{\log p+\sqrt{(\log p)^{2}+8(\beta\log p)^{2}}}{4}=\log p\cdot \dfrac{1+\sqrt{1+8\beta^2}}{4},
    \end{align*}
\end{example}

\begin{remark}
    Now we focus on the Haar measure $\mu_{p}$. To compute the associated asymptotic power means $\beta_{q}$ for the support of $\mu_{p}$, one may either proceed by direct integration, as in Proposition \ref{PropCalculosHar}, or apply Theorem \ref{MainTheoremPaper2}. We verify the latter approach. Recall from \cite[Remark 4.2]{paper1} that the Haar measure coincides with $\mu_{\alpha}$ for $\alpha=\frac{p}{p-1}\log p$. For the Harmonic mean, we obtain
    \begin{align*}
        \beta_{-1}^{-1}=\dfrac{\log p}{\alpha-\log p}\log\left(\dfrac{\alpha}{\log p}\right)\bigg\lvert_{\alpha=\frac{p}{p-1}\log p}=(p-1)\log\left(\dfrac{p}{p-1}\right).
    \end{align*}For the quadratic mean,
    \begin{align*}
        \beta_{2}^{2}=2\alpha^{2}-\alpha\log p\bigg\lvert_{\alpha=\frac{p}{p-1}\log p}=\dfrac{p(p+1)}{(p-1)^{2}}.
    \end{align*} In a similar way, one recovers the asymptotic geometric mean from Proposition \ref{derivative in zero}.
\end{remark}

\section*{Acknowledgements}
We thank Godofredo Iommi for comments on the writing and structure of the article.

M.A. was supported by ANID Fondecyt postdoctoral grant 3261344 from Chile.

\bibliographystyle{amsalpha}
\bibliography{refs.bib}

\end{document}